\newtheorem{thm}{Theorem}
\newtheorem{lem}[thm]{Lemma}
\newtheorem{cor}[thm]{Corollary}
\newtheorem{prop}[thm]{Proposition}
\newcommand{\C}{{\mathbb C}}
\newcommand{\D}{{\mathbb D}}
\begin{document}

\title[Logarithmic convexity of area integral means]
{\bf Logarithmic convexity of\\ area integral means for analytic functions}

\author{Chunjie Wang}
\address{Chunjie Wang, Department of Mathematics, Hebei University of Technology, 
Tianjin 300401, China}
\email{wcj@hebut.edu.cn}

\author{Kehe Zhu}
\address{Kehe Zhu, Department of Mathematics and Statistics, State University
of New York, Albany, NY 12222, USA}
\email{kzhu@math.albany.edu}

\keywords{logarithmic convexity, area integral means, Bergman spaces.}

\subjclass[2000]{Primary 30H10, 30H20}

\begin{abstract}
We show that the $L^2$ integral mean on $r\D$ of an analytic function in the unit disk $\D$
with respect to the weighted area measure $(1-|z|^2)^\alpha\,dA(z)$, where $-3\le\alpha\le0$, 
is a logarithmically convex function of $r$ on $(0,1)$. We also show that the range $[-3,0]$
for $\alpha$ is best possible.
\end{abstract}

\maketitle

\section{Introduction}

Let $\D$ denote the unit disk in the complex plane $\C$ and let $H(\D)$ denote the space
of all analytic functions in $\D$. For any real number $\alpha$ let
$$dA_\alpha(z)=(1-|z|^2)^\alpha\,dA(z),$$
where $dA$ is area measure on $\D$.

For any $f\in H(\D)$ and $0<p<\infty$ we consider the weighted area integral means
$$M_{p,\alpha}(f,r)=\frac{\displaystyle\int_{r\D}|f(z)|^p\,dA_\alpha(z)}
{\displaystyle\int_{r\D}\,dA_\alpha(z)},\quad 0<r<1.$$
It was proved in \cite{XZ} that the function $r\mapsto M_{p,\alpha}(f,r)$ is strictly
increasing for $r\in (0,1)$, unless $f$ is constant. It was also proved in \cite{XZ} that
for $\alpha\le-1$, the function $r\mapsto M_{p,\alpha}(f,r)$ is bounded on $(0,1)$ if and
only if $f$ belongs to the Hardy space $H^p$; and for $\alpha>-1$, the function
$r\mapsto M_{p,\alpha}(f,r)$ is bounded on $(0,1)$ if and only if $f$ belongs to the
weighted Bergman space 
$$A^p_\alpha=H(\D)\cap L^p(\D,dA_\alpha).$$
See \cite{D} for the theory of Hardy spaces and \cite{HKZ} for the general theory of
Bergman spaces in the unit disk.

The classical Hardy convexity theorem asserts that the integral means
$$M_p(f,r)=\frac1{2\pi}\int_0^{2\pi}|f(re^{it})|^p\,dt,$$
as a function of $r$ on $[0,1)$, is not only increasing but also logarithmically convex.
In other words, the function $r\mapsto\log M_p(f,r)$ is convex in $\log r$. See \cite{D}
again.

Motivated by Hardy's convexity theorem and by some circumstantial evidence, Xiao and Zhu boldly proposed the following conjecture in \cite{XZ}: the function $r\mapsto\log M_{p,\alpha}(f,r)$ is convex in $\log r$ when $\alpha\leq0$ and concave in $\log r$ when $\alpha>0$.

In this paper we prove the above conjecture when $-3\le\alpha\le0$ and $p=2$. The cases
$\alpha=0$ and $\alpha=-1$ are direct consequences of Hardy's convexity theorem and a
theorem of Taylor in \cite{T}; these cases were addressed in \cite{XZ}. We also show that the 
range $[-3,0]$ for $\alpha$ is best possible.

When $p=2$, we are able to reduce the problem to the case of monomials. But it should be pointed
out that the monomials are by no means simple in this problem, or at least we have not found an
easy way to deal with the monomials. Our approach is, unfortunately, by brutal force. We 
still do not know how to deal with the case $p\not=2$ for general $f$.

A great deal of elementary algebraic manipulations have been omitted in the presentation. This
would probably make the reading of the paper somewhat difficult. For those computations as well 
as the ones that remain here, we first obtained the details by hand and then verified them 
with Maple (a widely used computer algebra system).

\section{The case of monomials}

We first consider the case when $f(z)=z^k$ is a monomial. Despite the simplicity of these functions, the verification of the logarithmic convexity of $M_{p,\alpha}(z^k,r)$ is highly
nontrivial. We begin with some general lemmas concerning logarithmic convexity of positive
functions.

\begin{lem}
Suppose $f$ is twice differentiable on $(0,1)$. Then $f(x)$ is convex in $\log x$ if and only
if $f'(x)+xf''(x)\ge0$ on $(0,1)$.
\label{1}
\end{lem}

\begin{proof}
Let $t=\log x$. It follows easily from the Chain Rule that
$$\frac{d^2f}{dt^2}=x\left[f'(x)+xf''(x)\right].$$
Thus $f$ is convex in $\log x$ if and only if $f'(x)+xf''(x)\ge0$ on $(0,1)$.
\end{proof}

\begin{cor}
Suppose $f$ is twice differentiable on $(0,1)$. Then $f(x)$ is convex in $\log x$ if and only
if $f(x^2)$ is convex in $\log x$.
\label{2}
\end{cor}

\begin{proof}
For the function $g(x)=f(x^2)$, we easily compute that
$$g'(x)+xg''(x)=4x\left[f'(x^2)+x^2f''(x^2)\right].$$
The desired result then follows from Lemma~\ref{1}.
\end{proof}

\begin{cor}
Suppose $f$ is positive and twice differentiable on $(0,1)$. Then the function $\log f(x)$ is
convex in $\log x$ if and only if
$$D(f(x))=:\frac{f'(x)}{f(x)}+x\left(\frac{f'(x)}{f(x)}\right)'=\frac{f'(x)}{f(x)}
+x\frac{f''(x)}{f(x)}-x\left(\frac{f'(x)}{f(x)}\right)^2$$
is nonnegative on $(0,1)$.
\label{3}
\end{cor}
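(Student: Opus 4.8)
The plan is to apply Lemma~\ref{1} directly to the auxiliary function $g(x)=\log f(x)$. Since $f$ is positive and twice differentiable on $(0,1)$, the composition $g=\log f$ is well defined and twice differentiable there, so Lemma~\ref{1} applies to $g$ verbatim. The assertion ``$\log f(x)$ is convex in $\log x$'' is literally the assertion ``$g$ is convex in $\log x$'', which by Lemma~\ref{1} is equivalent to $g'(x)+xg''(x)\ge0$ on $(0,1)$. Thus the entire task reduces to recognizing that the quantity $g'(x)+xg''(x)$ is exactly $D(f(x))$.

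To carry this out, I would differentiate $g=\log f$ twice, obtaining
$$g'(x)=\frac{f'(x)}{f(x)},\qquad g''(x)=\left(\frac{f'(x)}{f(x)}\right)'=\frac{f''(x)}{f(x)}-\left(\frac{f'(x)}{f(x)}\right)^2,$$
where the second equality for $g''$ is just the quotient rule. Substituting these into $g'(x)+xg''(x)$ reproduces precisely the two displayed expressions defining $D(f(x))$: the first form comes from writing $g''=(f'/f)'$ directly, and the second from expanding that derivative via the quotient-rule identity above. Hence $g'(x)+xg''(x)=D(f(x))$ identically on $(0,1)$, and the stated equivalence follows at once.

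There is essentially no obstacle here, as the corollary is a direct specialization of Lemma~\ref{1} to a logarithm. The only points requiring even minor care are that $\log f$ is genuinely twice differentiable, which is guaranteed by the positivity hypothesis on $f$, and that the two formulas given for $D(f(x))$ do coincide, which is exactly the elementary computation of $(f'/f)'$ recorded above. No idea beyond Lemma~\ref{1} is needed.
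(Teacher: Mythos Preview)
Your proposal is correct and follows exactly the paper's own approach: apply Lemma~\ref{1} to $g(x)=\log f(x)$ and observe that $g'(x)+xg''(x)=D(f(x))$. The paper's proof is the one-line version of what you wrote out in detail.
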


\begin{proof}
Apply Lemma~\ref{1} to the function $g(x)=\log f(x)$. The desired result follows immediately.
\end{proof}

\begin{prop}
Suppose $k\ge0$, $-2\le\alpha\le0$, and $0<p<\infty$. Then the function 
$\log M_{p,\alpha}(z^k,r)$ is convex in $\log r$.
\label{4}
\end{prop}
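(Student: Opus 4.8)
The plan is to compute the integral means explicitly and reduce everything to a single one-variable inequality. Passing to polar coordinates and substituting $u=|z|^2$, the angular integration kills the phase, and one finds
$$M_{p,\alpha}(z^k,r)=\frac{P(r^2)}{Q(r^2)},\qquad P(s)=\int_0^s u^{a}(1-u)^\alpha\,du,\quad Q(s)=\int_0^s (1-u)^\alpha\,du,$$
where $a=kp/2\ge0$. Writing $G(s)=P(s)/Q(s)$, the object we must control is $\log G(r^2)$ as a function of $\log r$. Applying Corollary~\ref{2} to $h(x)=\log G(x)$, the function $\log G(x^2)=\log M_{p,\alpha}(z^k,x)$ is convex in $\log x$ if and only if $\log G(s)$ is convex in $\log s$ on $(0,1)$. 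This removes the square and leaves a cleaner target.

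Next I would apply Corollary~\ref{3} to $G$. Since $\log G=\log P-\log Q$ and the operator $D$ is additive on logarithms, $D(G)=D(P)-D(Q)$, so the goal becomes the pointwise inequality $D(P)\ge D(Q)$ on $(0,1)$. The ingredients are elementary: $P'(s)=s^a(1-s)^\alpha$ and $Q'(s)=(1-s)^\alpha$, whence
$$s\frac{P''}{P'}=a-\frac{\alpha s}{1-s},\qquad s\frac{Q''}{Q'}=-\frac{\alpha s}{1-s}.$$
Note that $Q$ is in fact elementary, $Q(s)=(1-(1-s)^{\alpha+1})/(\alpha+1)$, whereas $P$ is an incomplete Beta integral with no closed form. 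Substituting these into the definition of $D$ and writing $p_1=P'/P$, $q_1=Q'/Q$, the inequality $D(P)\ge D(Q)$ takes the form
$$(p_1-q_1)\Bigl(1-\tfrac{\alpha s}{1-s}-s(p_1+q_1)\Bigr)+a\,p_1\ge0.$$
Here the factor $p_1-q_1$ is positive, since $s^aQ(s)-P(s)=\int_0^s(s^a-u^a)(1-u)^\alpha\,du>0$; this is a useful sanity check, and $a\,p_1\ge0$ as well, but the middle factor can be negative, so a termwise argument fails.

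Finally I would clear the positive factor $P^2Q^2/(1-s)^\alpha$ and reduce the claim to showing that an explicit expression $\Psi(s)$, built polynomially from $P$, $Q$, $s^a$, $(1-s)^\alpha$ and $1-\alpha s/(1-s)$, is nonnegative on $(0,1)$. This integral inequality is the crux and the only genuine difficulty: because $P$ has no elementary primitive, one cannot simply expand and compare, and the sign has to be extracted from the interplay of the incomplete Beta integral with the surrounding elementary terms, with the hypothesis $-2\le\alpha\le0$ supplying the sign conditions that force $\Psi\ge0$. I expect to settle it by a boundary-plus-monotonicity argument—checking $\Psi(0^+)=0$ and then tracking the sign of $\Psi'(s)$, differentiating repeatedly so as to eliminate the integrals $P$ and $Q$ in favour of their derivatives—rather than by any slick closed-form manipulation. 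This is the brute-force heart of the proposition; the two reductions preceding it are routine.
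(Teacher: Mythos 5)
Your two reductions coincide exactly with the paper's: polar coordinates give $M_{p,\alpha}(z^k,r)=f_{a}(r^2)/f_0(r^2)$ with $a=pk/2$, Corollary~\ref{2} removes the square, and Corollary~\ref{3} turns the claim into $D(P)-D(Q)\ge0$; your algebra, including the identity $D(P)-D(Q)=(p_1-q_1)\bigl(1-\tfrac{\alpha s}{1-s}-s(p_1+q_1)\bigr)+a\,p_1$ and the positivity of $p_1-q_1$, is correct. But the proposal stops where the proposition starts. The crux $\Psi(s)\ge0$ is only named, and the strategy you sketch for it --- check $\Psi(0^+)=0$ and differentiate repeatedly ``so as to eliminate the integrals $P$ and $Q$ in favour of their derivatives'' --- fails structurally as stated. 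After clearing denominators, $\Psi=(P'P+sP''P-s(P')^2)Q^2-(Q'Q+sQ''Q-s(Q')^2)P^2$ is \emph{quadratic} in the non-elementary function $P$ (only $Q$ has the closed form $(1-(1-s)^{\alpha+1})/(\alpha+1)$), and differentiation never removes it: $(c\,P^2)'=c'P^2+2c\,PP'$ regenerates $P$ at every order. A derivative cascade can only terminate if $P$ enters \emph{linearly} with polynomial coefficients of low degree, and nothing in your plan produces that; nor does the sketch say where the hypothesis $-2\le\alpha$ actually enters, whereas in any working argument it must appear at concrete sign steps.

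The missing idea in the paper is to treat the exponent as a continuous parameter: set $\Delta(\lambda,x)=D(f_\lambda)-D(f_0)$, observe $\Delta(0,x)=0$, and prove $\partial\Delta/\partial\lambda>0$ for all $\lambda>0$, $x\in(0,1)$. Differentiating in $\lambda$ annihilates the entire $f_0$ block (it is $\lambda$-independent), so the hard analysis involves a single non-elementary function $h=f_\lambda$ together with $\partial h/\partial\lambda$, the latter controlled by $d_1(x)=h\log x-\partial h/\partial\lambda\ge0$ since $d_1'=h/x>0$ and $d_1(0)=0$. A sign analysis of $d_2(x)=h-2x(h')^2/(h'+xh'')$, via the quadratic $e_2(x)$ --- this is exactly where $-2\le\alpha<0$ is used, in $e_2'(x)\ge-2\alpha(\lambda+2+\alpha)>0$ and $e_2(1)=-\alpha(2+\alpha)\ge0$ --- splits $(0,1)$ at a point $x^*$ with $\partial\Delta/\partial\lambda>0$ immediate on $[x^*,1)$; on $(0,x^*)$, after divisions by factors of known sign, one reaches a quantity $\delta_1$ in which $h$ appears \emph{linearly} with a quadratic polynomial coefficient, so that $\delta_1'''$ is elementary and positive, and the boundary cascade $\delta_1(0)=\delta_1'(0)=\delta_1''(0)=0$, $\delta'(0)=0$ finishes. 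That parameter-monotonicity device is precisely what makes a boundary-plus-monotonicity endgame possible; applied directly to your $\Psi$, with $\lambda$ frozen at $a$, it does not, and so the proposal has a genuine gap at its heart.
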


\begin{proof}
The case $\alpha=0$ follows from the classical Hardy convexity theorem and a theorem of Taylor
in \cite{T}; see \cite{XZ} as well as the remark after Propositoin~\ref{7}. For the rest of 
the proof we assume that $\alpha<0$.

By polar coordinates and an obvious change of variables, we have
$$M_{p,\alpha}(z^k,r)=\frac{\displaystyle\int_0^{r^2}(1-t)^\alpha t^{pk/2}\,dt}
{\displaystyle\int_0^{r^2}(1-t)^\alpha\,dt}.$$
By Corollary \ref{2}, we just need to show that the function
$$F_k(x)=\log\frac{\displaystyle\int_0^x(1-t)^\alpha t^{pk/2}\,dt}
{\displaystyle\int_0^x(1-t)^\alpha\,dt}$$
is convex in $\log x$. Rewrite
$$F_k(x)=\log f_{pk/2}(x)-\log f_0(x),\qquad 0<x<1,$$
where for any nonnegative parameter $\lambda$ we define
\begin{equation}
f_\lambda(x)=\int_0^xt^\lambda(1-t)^\alpha\,dt,\qquad 0<x<1.
\label{eq1}
\end{equation}
By Corollary \ref{3}, $F_k(x)$ is convex in $\log x$ if and only if
$$\left[\frac{f_{pk/2}'(x)}{f_{pk/2}(x)}+x\left(\frac{f_{pk/2}'(x)}{f_{pk/2}(x)}\right)'
\right]-\left[\frac{f_0'(x)}{f_0(x)}+x\left(\frac{f_0'(x)}{f_0(x)}\right)'\right]\ge0$$
for $x\in(0,1)$. Here and throughout the paper, the derivatives $f'_\lambda(x)$ and
$f''_\lambda(x)$ are taken with respect to $x$.

For any $\lambda\in[0,\infty)$ and $x\in(0,1)$ consider
\begin{equation}
\Delta(\lambda,x)=\frac{f_\lambda'(x)}{f_\lambda(x)}+x\left(\frac{f_\lambda'(x)}{f_\lambda(x)}
\right)'-\left[\frac{f_0'(x)}{f_0(x)}+x\left(\frac{f_0'(x)}{f_0(x)}\right)'\right].
\label{eq2}
\end{equation}
The convexity of $F_k(x)$ in $\log x$ is then equivalent to
$\Delta(pk/2,x)\ge0$. Since $\Delta(0,x)=0$, the desired result will follow if we can show 
that for any fixed $x\in(0,1)$, the function $\lambda\mapsto\Delta(\lambda,x)$ is increasing 
on $[0,\infty)$.

To simplify notation, we are going to write $h=f_\lambda(x)$ and use $h'$, $h''$, $h'''$ to 
denote the various derivatives of $f_\lambda(x)$ with respect to $x$. On the other hand,
the derivative of various functions with respect to $\lambda$ will be written as 
$\partial/\partial\lambda$.

Since
\begin{equation}
h=\int_0^xt^\lambda(1-t)^\alpha\,dt,
\label{eq3}
\end{equation}
we immediately obtain
\begin{equation}
h'=x^\lambda(1-x)^\alpha,\quad
h''=(\lambda-\lambda x-\alpha x)x^{\lambda-1}(1-x)^{\alpha-1}.
\label{eq4}
\end{equation}
We also have
\begin{eqnarray*}
h'''&=&x^{\lambda-2}(1-x)^{\alpha-2}\left[(-\lambda+2\lambda\alpha-\alpha+\alpha^2
+\lambda^2)x^2\right.\\
&&+\left.(-2\lambda\alpha+2\lambda-2\lambda^2)x+(\lambda^2-\lambda)\right].
\end{eqnarray*}
On the other hand, it is easy to check that
$$\frac{\partial h}{\partial\lambda}=\int_0^xt^\lambda(1-t)^\alpha\,\log t\,dt,$$
and
$$\frac{\partial h'}{\partial\lambda}=\frac{\partial}{\partial x}\left(\frac{\partial h}{\partial \lambda}\right)=h'\log x,$$
and
$$\frac{\partial h''}{\partial\lambda}=\frac{h'}{x}+h''\log x.$$

In what follows we will use the notation $A\sim B$ to denote that $A$ and $B$ have the same
sign. This differs from the customary meaning of $\sim$ but will make our presentation much easier.

Rewrite
$$\Delta(\lambda,x)=\frac{h'}{h}+x\frac{h''}{h}
-x\left(\frac{h'}{h}\right)^2-\left[\frac{f'_0}{f_0}+x\frac{f_0''}{f_0}-x\left(\frac{f_0'}{f_0}\right)^2\right].$$
Since the function inside the brackets is independent of $\lambda$, we have
\begin{eqnarray*}
\frac{\partial\Delta}{\partial\lambda}
&=&\frac{1}{h^2}\left(h\frac{\partial h'}{\partial\lambda}+xh\frac{\partial h''}{\partial \lambda}-2xh'\frac{\partial h'}{\partial\lambda}\right)-\frac{1}{h^3}\frac{\partial h}{\partial \lambda}\left(hh'+xhh''-2x(h')^2\right)\\
&=&\frac1{h^2}\left(hh'\log x+hh'+xhh''\log x-2x(h')^2\log x\right)\\
&&-\frac1{h^3}\frac{\partial h}{\partial \lambda}\left(hh'+xhh''-2x(h')^2\right)\\
&=&\frac{h'}h+\frac1{h^3}\left(h\log x-\frac{\partial h}{\partial\lambda}\right)
(hh'+xhh''-2x(h')^2).
\end{eqnarray*}

We proceed to show that 
$$\frac{\partial\Delta(\lambda,x)}{\partial\lambda}>0,\qquad \lambda>0,x\in(0,1),$$
provided that $-2\le\alpha<0$. To this end, we fix $\lambda>0$ and regard the expression
$$\frac{\partial\Delta}{\partial\lambda}=\frac{h'}h+\frac1{h^2}\left(h\log x-\frac{\partial h}{\partial\lambda}\right)(h'+xh'')\left(h-\frac{2x(h')^2}{h'+xh''}\right)$$
as a function of $x$. It is clear that $\alpha<0$ and $\lambda>0$ imply that
$$h'+xh''\sim\lambda+1-(\lambda+1+\alpha)x>0$$
for all $x\in(0,1)$.

Let us consider the following two functions (with $\lambda$ fixed again):
$$d_1(x)=h\log x-\frac{\partial h}{\partial\lambda},$$
and
$$d_2(x)=h-\frac{2x(h')^2}{h'+xh''}=h-\frac{2x^{\lambda+1}(1-x)^{\alpha+1}}
{\lambda+1-(\lambda+1+\alpha)x}.$$

Since $d_1'(x)=h/x>0$, we have $d_1(x)\ge d_1(0)=0$. By direct computations,
\begin{eqnarray*}
d_2'(x)&=&x^\lambda(1-x)^{\alpha}-\frac{2x^\lambda(1-x)^{\alpha}(\lambda+1-(\lambda+2+\alpha)x)}{\lambda+1-(\lambda+1+\alpha)x}\\
&&-\frac{2(\lambda+1+\alpha)x^{\lambda+1}(1-x)^{\alpha+1}}{(\lambda+1-(\lambda+1+\alpha)x)^2}\\
&\sim&(\lambda+1-(\lambda+1+\alpha)x)^2\\
&&-2(\lambda+1-(\lambda+2+\alpha)x)(\lambda+1-(\lambda+1+\alpha)x)\\
&&-2(\lambda+1+\alpha)x(1-x)\\
&=&-(\lambda+1)^2+2(\lambda^2+2\lambda+1+\lambda\alpha)x-(\lambda+1+\alpha)^2x^2\\
&=:&e_2(x),
\end{eqnarray*}
and
\begin{eqnarray*}
e_2'(x)&=&2(\lambda^2+2\lambda+1+\lambda\alpha)-2(\lambda+1+\alpha)^2x\\
&\ge&2(\lambda^2+2\lambda+1+\lambda\alpha)-2(\lambda+1+\alpha)^2\\
&=&-2\alpha(\lambda+2+\alpha)>0.
\end{eqnarray*}
Here and in the next paragraph again, we use the assumption that $-2\le\alpha<0$.

Note that 
$$e_2(0)=-(\lambda+1)^2<0,\quad e_2(1)=-\alpha(2+\alpha)\ge0.$$
Since $e_2(x)$ is increasing on $(0,1)$, we see that either $e_2(x)$ is always negative
on $(0,1)$ (when $\alpha=-2$) or $e_2(x)$ has exactly one zero in $(0,1)$ (when $\alpha>-2$), say $c$, so that $e_2(x)<0$ for $x\in(0,c)$ and $e_2(x)>0$ for $x\in(c,1)$.

Similarly, we observe that
$$d_2(0)=0,\quad d_2(1)=h(1)>0.$$
Here we used the convention that $+\infty>0$. In the case when $e_2(x)$ is always negative
on $(0,1)$, $d_2(x)$ is decreasing on $(0,1)$, so that $d_2(x)<d_2(0)=0$ on $(0,1)$. In the
other case, $d_2(x)$ is decreasing on $(0,c)$ and increasing on $(c,1)$, we see that $d_2(x)$ also has exactly one zero in $(0,1)$. Either way, there exists $x^*\in(0,1]$ such that
$d_2(x)>0$ when $x^*\le x<1$ and $d_2(x)<0$ when $0<x<x^*$. 

If $x^*\leq x<1$, the condition $d_2(x)>0$ implies that $\partial\Delta/\partial\lambda>0$. 
If $0<x<x^*$, the condition $d_2(x)<0$ implies that 
$$hh'+xhh''-2x(h')^2<0,$$
from which we deduce that
$$\frac{\partial\Delta}{\partial\lambda}\sim -\frac{h^2h'}{hh'+xhh''-2x(h')^2}-h\log x+\frac{\partial h}{\partial\lambda}=:\delta(x).$$

Again, it follows from direct computations that
\begin{eqnarray*}
\delta'(x)&=&-\frac{2h(h')^2+h^2h''}{hh'+xhh''-2x(h')^2}\\
&&+\frac{h^2h'(2hh''+xhh'''-3xh'h''-(h')^2)}{(hh'+xhh''-2x(h')^2)^2}-\frac{h}{x}\\
&\sim&-(2x(h')^2+xhh'')(hh'+xhh''-2x(h')^2)\\
&&+xhh'(2hh''+xhh'''-3xh'h''-(h')^2)\\
&&-(hh'+xhh''-2x(h')^2)^2\\
&=&-(hh'+2xhh'')(hh'+xhh''-2x(h')^2)\\
&&+xhh'(2hh''+xhh'''-3xh'h''-(h')^2)\\
&\sim&-\left((h')^2+xh'h''+2x^2(h'')^2-x^2h'h'''\right)h+x(h')^2(h'+xh'')\\
&\sim&\left(-(\lambda+1)^2+(2\lambda^2+4\lambda+2+2\lambda\alpha+\alpha)x-
(\lambda+1+\alpha)^2x^2\right)h\\
&&+x^{\lambda+1}(1-x)^{\alpha+1}(\lambda+1-(\lambda+1+\alpha)x)h'=:\delta_1(x).
\end{eqnarray*}
Continuing the computations, we have
\begin{eqnarray*}
\delta_1'(x)&=&[2\lambda^2+4\lambda+2+2\lambda\alpha+\alpha-2(\lambda+1+\alpha)^2x]h\\
&&-2(\lambda+1+\alpha)x^{\lambda+1}(1-x)^{\alpha+1},\\
\delta_1''(x)&=&-2(\lambda+1+\alpha)^2h+[-\alpha+2(\lambda+1+\alpha)x]
x^{\lambda}(1-x)^{\alpha},\\
\delta_1'''(x)&=&-\alpha(\lambda+(\lambda+2+\alpha)x)x^{\lambda-1}(1-x)^{\alpha-1}.
\end{eqnarray*}
Since $\alpha<0$, $\lambda>0$, and $\lambda+2+\alpha>0$, we have $\delta_1'''(x)>0$ for
all $x\in(0,1)$.

It is easy to see that $\delta_1''(0)=\delta_1'(0)=\delta_1(0)=0$. With details deferred to after the proof, we also have $\delta'(0)=0$. It then follows from elementary calculus that 
the functions $\delta_1''(x)$, $\delta_1'(x)$, $\delta_1(x)$, and $\delta'(x)$ are all 
positive on $(0,x^*)$. This shows that
$$\frac{\partial\Delta(\lambda,x)}{\partial\lambda}>0,\qquad 0<x<x^*.$$
Combining this with our earlier conclusion on $[x^*,1)$, we obtain
$$\frac{\partial\Delta(\lambda,x)}{\partial\lambda}>0,\qquad x\in(0,1).$$
In particular, for any fixed $x\in(0,1)$, the function $\lambda\mapsto\Delta(\lambda,x)$
is increasing for $\lambda\in[0,\infty)$. This completes the proof of the proposition.
\end{proof}

In the previous paragraph, we claimed that $\delta'(0)=0$. This is elementary but cumbersome, so
we deferred the details to here. Recall from the formula for $\delta'(x)$ that there are three
terms for us to consider. One of the terms is easy, namely,
$$\lim_{x\to0}\frac hx=0,$$
since we are assuming that $\lambda>0$. L'Hopital's rule gives us
$$\lim_{x\to0}\frac{h}{xh'}=\lim_{x\to0}\frac{h'}{h'+xh''}=\frac1{\lambda+1}.$$
From the explicit formulas for $h'$, $h''$, and $h'''$ we deduce that
$$\lim_{x\to0}\frac{xh''}{h'}=\lambda,\quad \lim_{x\to0}\frac{x^2h'''}{h'}=\lambda^2-\lambda.$$
Consequently,
$$\lim_{x\to0}\left[\frac h{xh'}+\frac h{xh'}\frac{xh''}{h'}-2\right]=
\frac1{\lambda+1}+\frac\lambda{\lambda+1}-2=-1.$$
Therefore,
$$\lim_{x\to0}\frac{2h(h')^2+h^2h''}{hh'+xhh''-2x(h')^2}=\lim_{x\to0}\frac{h}{x}\frac{2+\frac{h}{xh'}\frac{xh''}{h'}}{\frac{h}{xh'}+\frac{h}{xh'}\frac{xh''}{h'}-2}=0,$$
and
\begin{eqnarray*}
&&\lim_{x\rightarrow0}\frac{h^2h'(2hh''+xhh'''-3xh'h''-(h')^2)}{(hh'+xhh''-2y(h')^2)^2}\\
&&=\lim_{x\rightarrow0}\frac{h}{x}\frac{h}{xh'}\frac{\displaystyle{2\frac{h}{xh'}\frac{xh''}{h'}+\frac{h}{xh'}\frac{x^2h'''}{h'}-3\frac{xh''}{h'}-1}}{\displaystyle{\left(\frac{h}{xh'}+\frac{h}{xh'}\frac{xh''}{h'}-2\right)^2}}=0.
\end{eqnarray*}
This shows that each of three terms in the formula for $\delta'(x)$ approaches $0$ as $x\to0$.
Thus $\delta'(0)=0$.

\begin{prop}
Suppose $k\ge0$, $-3\le\alpha\le0$, and $p=2$. Then the function $\log M_{2,\alpha}(z^k,r)$
is convex in $\log r$.
\label{5}
\end{prop}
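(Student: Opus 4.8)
The plan is to retain the reduction from the proof of Proposition~\ref{4} and then to exploit the single new feature of the hypothesis $p=2$: it forces the relevant parameter to be an \emph{integer}. Specializing that reduction to $p=2$, we must again prove only that
$$\Delta(k,x)\ge0,\qquad x\in(0,1),$$
for every integer $k\ge0$ and every $\alpha\in[-3,0]$, where $\Delta(\lambda,x)$ is the quantity in \eqref{eq2}. For $\alpha\in[-2,0]$ this is already supplied by Proposition~\ref{4}, whose proof in fact yields $\Delta(\lambda,x)\ge0$ for \emph{all} real $\lambda\ge0$. Hence the whole content of the present proposition lies in the range $\alpha\in[-3,-2)$.

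First I would reread the monotonicity argument of Proposition~\ref{4} and isolate the three points where $\alpha\ge-2$ was used: the bound $e_2'(x)\ge-2\alpha(\lambda+2+\alpha)>0$, the sign $e_2(1)=-\alpha(2+\alpha)\ge0$, and the positivity $\delta_1'''(x)>0$. The first and the third require only $\lambda+2+\alpha\ge0$, which for $\alpha\ge-3$ holds once $\lambda\ge1$; the auxiliary positivity $h'+xh''>0$ also persists, since that expression is positive at both $x=0$ and $x=1$. The middle inequality does fail for $\alpha<-2$, but its sole function was to locate the sign change of the increasing function $e_2$; when $e_2(1)<0$ one simply has $e_2<0$ throughout $(0,1)$, so that $d_2$ decreases from $d_2(0)=0$ and stays negative, forcing $x^*=1$. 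Thus for $\alpha\in[-3,-2)$ and $\lambda\ge1$ the argument always lands in the ``$\delta$-branch,'' which survives unchanged because $\delta_1'''>0$ still holds on $(0,1)$. This shows that $\lambda\mapsto\Delta(\lambda,x)$ is increasing on $[1,\infty)$ for each fixed $\alpha\in[-3,-2)$.

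The genuinely new difficulty is that the monotonicity can no longer be started at $\lambda=0$: for $\alpha<-2$ the point $\lambda=0$ lies outside the region $\lambda+2+\alpha\ge0$, so the free base value $\Delta(0,x)=0$ is unavailable. This is exactly where the integrality saves the day: since $k$ is an integer and $-2-\alpha\le1$, the first value we must reach, namely $\lambda=1$, already belongs to the monotone region. It therefore suffices to establish the single base case
$$\Delta(1,x)\ge0,\qquad x\in(0,1),\ \alpha\in[-3,-2),$$
directly; combined with the monotonicity on $[1,\infty)$, it gives $\Delta(k,x)\ge\Delta(1,x)\ge0$ for every integer $k\ge2$, and $k=0$ gives $\Delta(0,x)=0$. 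For the base case I would substitute the explicit formulas $f_0(x)=(1-(1-x)^{\alpha+1})/(\alpha+1)$ and, after one integration by parts, $f_1(x)=(f_0(x)-x(1-x)^{\alpha+1})/(\alpha+2)$, reducing $\Delta(1,x)\ge0$ to a concrete inequality in $x$ and $\alpha$ built from powers of $x$ and $1-x$.

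I expect this base case $\Delta(1,x)\ge0$ to be the main obstacle, precisely because it is the one ingredient that the convenient ``increasing from $\Delta(0,x)=0$'' mechanism cannot provide once $\alpha<-2$; it will have to be settled by the same kind of brutal-force computation (done by hand and checked with Maple) used throughout the paper. The monotonicity invoked for $k\ge2$, on the other hand, is strict on the open interval $(1,k)$ and so meets no degeneracy, even at the corner $\alpha=-3$, $\lambda=1$, where $\lambda+2+\alpha$ vanishes.
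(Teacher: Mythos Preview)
Your architecture --- salvage the monotonicity argument of Proposition~\ref{4} on a restricted $\lambda$-range and then settle a single base case directly --- matches the paper's exactly, including the observation that for $\alpha\in[-3,-2)$ one has $e_2(1)<0$, hence $d_2<0$ throughout and $x^*=1$. The one substantive difference is the choice of base point: you anchor at $\lambda_0=1$, the smallest positive integer value of $pk/2$; the paper anchors at the $\alpha$-dependent value $\lambda_0=-(\alpha+2)\in(0,1]$. That choice is the paper's real trick: at $\lambda=-(\alpha+2)$ the integrand $t^{-2-\alpha}(1-t)^\alpha$ is an exact power of $(1-t)/t$, so the substitution $t\mapsto1/s$ produces the closed form $h(x)=-\frac{1}{\alpha+1}(1/x-1)^{\alpha+1}$, whence $D(h(x))=-(\alpha+1)/(1-x)^2$, and the base-case inequality $\Delta(\lambda_0,x)>0$ collapses to an elementary one-variable monotonicity. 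Your base case $\Delta(1,x)\ge0$ is certainly true (indeed the paper's argument gives $\Delta(1,x)\ge\Delta(-(\alpha+2),x)>0$), but establishing it directly from your formula $f_1=(f_0-x(1-x)^{\alpha+1})/(\alpha+2)$ leaves $D(f_1)$ as a genuine two-variable expression in $x$ and $(1-x)^{\alpha+1}$ that does not simplify, so the ``brutal force'' you anticipate would be noticeably heavier than what the paper needs. In short, your plan is sound, but the paper's $\lambda_0$ is chosen precisely to dissolve the obstacle you correctly flag.
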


\begin{proof}
By Proposition~\ref{4}, the result already holds in the case $-2\le\alpha\le0$. So for the rest 
of the proof we assume that $-3\le\alpha<-2$.

We still consider the functions $\Delta(\lambda,x)$ and $\partial\Delta/\partial\lambda$. But
this time we restrict our attention to $0<x<1$ and $\lambda_0\le\lambda<\infty$, where
$\lambda_0=-(\alpha+2)>0$. Our strategy is to show that
$$\Delta(\lambda_0,x)>0,\qquad\frac{\partial\Delta}{\partial\lambda}(\lambda,x)>0$$
for all $x\in(0,1)$ and $\lambda\in(\lambda_0,\infty)$. This will then imply that
$$\Delta(\lambda,x)\ge\Delta(\lambda_0,x)>0$$
for all $\lambda\ge\lambda_0$ and $x\in(0,1)$. In particular, we will have $\Delta(pk/2,x)>0$
for all $k\ge1$ and $x\in(0,1)$, because in this case $p=2$ and $\lambda_0\in(0,1]$.

For $\lambda=\lambda_0$, we have
$$h=h(x)=\int_0^xt^{-2-\alpha}(1-t)^\alpha\,dt.$$
Changing variables from $t$ to $1/s$, we easily obtain
$$h(x)=-\frac1{\alpha+1}\,\left(\frac1x-1\right)^{\alpha+1}.$$
Calculating with the $D$-notation from Corollary~\ref{3} we get
$$D(h(x))=-\frac{\alpha+1}{(1-x)^2},$$
and
$$D(f_0(x))=(\alpha+1)(1-x)^{\alpha-1}\,\frac{1-x-\alpha x-(1-x)^{\alpha+1}}
{\left[1-(1-x)^{\alpha+1}\right]^2}.$$
It follows that
\begin{eqnarray*}
\Delta(\lambda_0,x)&=&D(h(x))-D(f_0(x))\\
&\sim&\left[1-(1-x)^{\alpha+1}\right]^2+(1-x)^{\alpha+1}\left[1-x-\alpha x-(1-x)^{\alpha+1}
\right]\\
&=&1-(1+x+\alpha x)(1-x)^{\alpha+1}=:\delta_3(x).
\end{eqnarray*}
Since
$$\delta_3'(x)=(\alpha+1)(\alpha+2)x(1-x)^\alpha>0,\qquad 0<x<1,$$
we see that $\delta_3(x)>\delta_3(0)=0$ for $0<x<1$. This shows that
$$\Delta(\lambda_0,x)>0,\qquad 0<x<1.$$

To finish the proof of the proposition, we indicate how to adapt the proof of
Proposition~\ref{4} to show that
$$\frac{\partial\Delta}{\partial\lambda}(\lambda,x)>0,\qquad \lambda_0<\lambda<\infty,0<x<1.$$
So for the rest of this proof, we are going to use the notation from the proof of
Proposition~\ref{4}.

First, observe that the assumptions $\lambda>\lambda_0$ and $-3\le\alpha<-2$ give
$e_2'(x)>0$ on $(0,1)$, so that $e_2(x)$ is increasing on $[0,1]$. Since
$$e_2(0)=-(\lambda+1)^2<0,\qquad e_2(1)=-\alpha(2+\alpha)<0,$$
the function $e_2(x)$ is always negative on $[0,1]$, which implies that the function $d_2(x)$
is decreasing on $(0,1)$. But $d_2(0)=0$, so $d_2(x)$ is always negative on $(0,1)$. 
Use $x^*=1$ in the proof of Proposition~\ref{4} and continue from there until the equation
$$\delta_1'''(x)=-\alpha\left[\lambda+(\lambda+2+\alpha)x\right]x^{\lambda-1}(1-x)^{\alpha-1}.$$
The assumptions $-3\le\alpha<-2$ and $\lambda>\lambda_0$ imply that $\delta_1'''(x)>0$ for
all $x\in(0,1)$. The rest of the proof of Proposition~\ref{4} remains valid here. This
completes the proof of Proposition~\ref{5}.
\end{proof}

Finally in this section we show that the range $-3\le\alpha\le0$ in the case $p=2$ is best
possible.

\begin{prop}
Suppose $\alpha\not\in[-3,0]$ and $p=2$. Then there exist positive integers $k$ such
that the function $\log M_{2,\alpha}(z^k,r)$ is not convex in $\log r$ for $r\in(0,1)$.
\label{6}
\end{prop}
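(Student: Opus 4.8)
The plan is to use the reduction already set up in the proof of Proposition~\ref{4}. Since $p=2$ forces $\lambda=pk/2=k$, Corollaries~\ref{2} and \ref{3} show that $\log M_{2,\alpha}(z^k,r)$ is convex in $\log r$ if and only if $\Delta(k,x)\ge0$ for all $x\in(0,1)$, where $\Delta(\lambda,x)$ is the quantity in \eqref{eq2}. Thus, to prove non-convexity, it suffices to produce a single positive integer $k$ together with a subinterval of $(0,1)$ on which $\Delta(k,x)<0$. I would locate such a subinterval at one of the two endpoints, treating the regimes $\alpha>0$ and $\alpha<-3$ separately: in each case I compute a one-sided limit of $\Delta(k,\cdot)$ and show it is strictly negative, since a strictly negative limit forces $\Delta(k,x)<0$ on a nearby interval.

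For $\alpha>0$ I would work near $x=0$. Expanding $f_\lambda(x)=\int_0^x t^\lambda(1-t)^\alpha\,dt=\frac{x^{\lambda+1}}{\lambda+1}\bigl(1-\alpha\tfrac{\lambda+1}{\lambda+2}\,x+O(x^2)\bigr)$ and feeding this into the $D$-notation of Corollary~\ref{3} gives $\lim_{x\to0^+}D(f_\lambda(x))=-\alpha\frac{\lambda+1}{\lambda+2}$, whence
$$\lim_{x\to0^+}\Delta(\lambda,x)=-\alpha\frac{\lambda+1}{\lambda+2}+\frac\alpha2=-\frac{\alpha\lambda}{2(\lambda+2)}.$$
For $\alpha>0$ and any integer $k\ge1$ (for instance $k=1$) this limit is negative, so $\Delta(k,x)<0$ for $x$ close to $0$ and convexity fails. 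This expansion is valid for every real $\alpha$, as it uses only the smoothness of $(1-t)^\alpha$ at $t=0$.

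For $\alpha<-3$ I would instead take $k=1$ and examine the opposite endpoint $x\to1^-$. The point of choosing $k=1$ is that the positivity argument of Proposition~\ref{5} only reaches $\lambda\ge\lambda_0=-(\alpha+2)$, and $\lambda_0>1$ precisely when $\alpha<-3$; hence the monomial $z$ is the natural candidate for failure. Here the antiderivatives are explicit,
$$f_0(x)=\frac{1-(1-x)^{\alpha+1}}{\alpha+1},\qquad f_1(x)=\frac1{\alpha+1}-\frac1{\alpha+2}-\frac{(1-x)^{\alpha+1}}{\alpha+1}+\frac{(1-x)^{\alpha+2}}{\alpha+2}.$$
Writing $u=1-x$, both $f_0$ and $f_1$ blow up like $(1-x)^{\alpha+1}/(-(\alpha+1))$ as $x\to1^-$ (because $\alpha+1<0$), so $D(f_0)$ and $D(f_1)$ each diverge like $-(\alpha+1)u^{-2}$. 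The crux is that in $\Delta(1,x)=D(f_1(x))-D(f_0(x))$ the singular terms of orders $u^{-2}$ and $u^{-1}$ cancel, leaving a finite limit; a direct computation with the formulas above yields
$$\lim_{x\to1^-}\Delta(1,x)=\frac{\alpha+1}{(\alpha+2)^2}<0,$$
the inequality holding since $\alpha+1<0$. Hence $\Delta(1,x)<0$ near $x=1$, and $\log M_{2,\alpha}(z,r)$ is not convex in $\log r$.

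The main obstacle is this last limit. Because $f_0(1)=f_1(1)=+\infty$ for $\alpha\le-1$, one cannot evaluate at the endpoint directly; instead I must carry the expansions of $f_0,f_1$ (and, through \eqref{eq4}, of their derivatives) far enough in $u=1-x$ to witness the cancellation of the $u^{-2}$ and $u^{-1}$ singularities and to isolate the $O(1)$ coefficient. It is exactly here that the threshold $-3$ enters: the bounded ``regular parts'' $\frac1{\alpha+1}-\frac1{\alpha+2}$ of $f_1$ and $\frac1{\alpha+1}$ of $f_0$ contribute to $D$ only at relative order $u^{|\alpha+1|}$, which exceeds the decisive order $u^2$ precisely when $|\alpha+1|>2$, i.e.\ $\alpha<-3$. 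For $-3\le\alpha<-1$ these regular parts do affect the $O(1)$ term and restore nonnegativity, consistently with Proposition~\ref{5}; only once $\alpha<-3$ does the sign flip, which is what makes the range $[-3,0]$ sharp. I expect the algebra of extracting the $O(1)$ coefficient to be the one genuinely laborious step, best organized by expanding $h,h',h''$ in powers of $u$ and substituting into the formula for $\Delta$.
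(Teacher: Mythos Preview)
Your proposal is correct, but it diverges from the paper's proof in interesting ways.

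For $\alpha>0$ the paper works at the \emph{right} endpoint: since $h(1)$ and $f_0(1)$ are finite, it shows that the quantity in \eqref{eq6} has the negative limit $-\alpha\int_0^1(1-t^\lambda)(1-t)^\alpha\,dt/(h(1)f_0(1))$ while \eqref{eq5} tends to $0$, so $\Delta(\lambda,x)<0$ near $x=1$. You instead expand at $x=0$ and get the closed-form limit $-\alpha\lambda/(2(\lambda+2))$; this is arguably cleaner, needs only Taylor expansion of $(1-t)^\alpha$ at $t=0$, and the resulting formula makes transparent both the sign flip at $\alpha=0$ and the fact that the failure occurs for every $k\ge1$.

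For $\alpha<-3$ you and the paper both take $k=1$ and analyze $x\to1^-$, arriving at the same limit $(\alpha+1)/(\alpha+2)^2$. The paper does this for \emph{general} $\lambda$: it splits $\Delta=\Delta_1+x\Delta_2$, applies L'H\^opital several times (using only that $\alpha<-3$, not any closed form for $h$), and obtains $\lim_{x\to1}\Delta(\lambda,x)=\lambda(\alpha+1)(\lambda+2+\alpha)/((\alpha+2)^2(\alpha+3))$, then specializes to $\lambda=1$. Your route exploits the fact that for $\lambda=0,1$ the antiderivatives are elementary, so the expansion in $u=1-x$ can be done explicitly; this sidesteps the L'H\^opital machinery at the price of handling only $k=1$. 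Since the statement only asks for the existence of some $k$, that is enough, and your observation that the ``regular parts'' contribute at order $u^{|\alpha+1|}$, hence are negligible precisely when $\alpha<-3$, neatly explains why the threshold sits at $-3$---something the paper's general-$\lambda$ formula shows numerically but not structurally.
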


\begin{proof}
Once again we consider the function $\Delta(\lambda,x)$. We are going to show that if
$\alpha\not\in[-3,0]$ then $\Delta(pk/2,x)<0$ for certain positive integers $k$ and $x$ 
sufficiently close to $1$.

First consider the case in which $\alpha>0$. In this case, we write
$$\Delta(\lambda,x)=\left(\frac{h'}h-\frac{f_0'}{f_0}\right)-x\left[\left(\frac{h'}h\right)^2-
\left(\frac{f_0'}{f_0}\right)^2\right]+x\left(\frac{h''}h-\frac{f_0''}{f_0}\right).$$
Using the formulas from (\ref{eq4}) we see that $\Delta(\lambda,x)$ is $(1-x)^{\alpha-1}$ times
the sum of
\begin{equation}
(1-x)\left(\frac{x^\lambda}h-\frac1{f_0}\right)-x(1-x)^{\alpha+1}\left(\frac{x^{2\lambda}}{h^2}
-\frac1{f_0^2}\right)
\label{eq5}
\end{equation}
and
\begin{equation}
\frac x{hf_0}\left[(\lambda-\lambda x-\alpha x)x^{\lambda-1}f_0+\alpha h\right].
\label{eq6}
\end{equation}
The assumption $\alpha>0$ implies that the integrals
$$h(1)=\int_0^1t^\lambda(1-t)^\alpha\,dt,\quad f_0(1)=\int_0^1(1-t)^\alpha\,dt,$$
are finite and positive numbers. It follows that the function defined by (\ref{eq5}) approaches 
$0$ as $x\to1$, and
$$\lim_{x\to1}\left[(\lambda-\lambda x-\alpha x)x^{\lambda-1}f_0+\alpha h\right]=-\alpha
\int_0^1(1-t^\lambda)(1-t)^\alpha\,dt<0.$$
We deduce that $\Delta(\lambda,x)<0$ when $x$ is sufficiently
close to $1$. Consequently, if $\alpha>0$, then for any $0<p<\infty$ and any $k>0$, the function 
$\log M_{p,\alpha}(z^k,r)$ is not convex in $\log r$ for $r\in(0,1)$.

Next we consider the case in which $\alpha<-3$. In this case, we rewrite
$$\Delta(\lambda,x)=\left(\frac{h'}{h}-\frac{f_0'}{f_0}\right)+x\left(\frac{h''}{h}-2\frac{h'f_0'}{hf_0}-\frac{f_0''}{f_0}+2\frac{(f_0')^2}{f_0^2}\right)-x\left(\frac{h'}{h}-\frac{f_0'}{f_0}\right)^2.$$
Let
$$\Delta_1(\lambda,x)=\left(\frac{h'}{h}-\frac{f_0'}{f_0}\right)-x\left(\frac{h'}{h}-\frac{f_0'}{f_0}\right)^2,$$
and
$$\Delta_2(\lambda,x)=\frac{h''}{h}-2\,\frac{h'f_0'}{hf_0}-\frac{f_0''}{f_0}+2\,\frac{(f_0')^2}{f_0^2}.$$
Then 
$$\Delta(\lambda,x)=\Delta_1(\lambda,x)+x\Delta_2(\lambda,x).$$

Observe that the condition $\alpha<-3$ implies that
$$h-x^\lambda f_0=\int_0^x(t^\lambda-x^\lambda)(1-t)^\alpha\,dt\to-\infty$$
as $x\to1$, and we can use L'Hospital's Rule to obtain the limits
\begin{equation}
\lim_{x\to1}\frac{(1-x)^{\alpha+1}}{h}=\lim_{x\to1}\frac{(1-x)^{\alpha+1}}{f_0}=-(\alpha+1),
\label{eq7}
\end{equation}
and
$$\lim_{x\to1}\frac{h-x^\lambda f_0}{(1-x)^{\alpha+2}}=\lim_{x\to1}\frac{-\lambda x^{\lambda-1} f_0}{-(\alpha+2)(1-x)^{\alpha+1}}=-\frac{\lambda}{(\alpha+1)(\alpha+2)},$$
and
$$\lim_{x\to1}\left(\frac{h'}{h}-\frac{f_0'}{f_0}\right)=\lim_{x\to1}\frac{(1-x)^{2\alpha+2}}{hf_0}\cdot\frac{x^\lambda f_0-h}{(1-x)^{\alpha+2}}=\lambda\,\frac{\alpha+1}{\alpha+2}.$$
It follows that
$$\lim_{x\to1}\Delta_1(\lambda,x)=\lambda\,\frac{\alpha+1}{\alpha+2}-\left(\lambda\,
\frac{\alpha+1}{\alpha+2}\right)^2=\lambda\,\frac{\alpha+1}{\alpha+2}\left(1-\lambda\,
\frac{\alpha+1}{\alpha+2}\right).$$

On the other hand,
$$\Delta_2(\lambda,x)=\frac{(h''f_0-2h'f'_0)f_0+(2(f_0')^2-f_0f''_0)h}{hf_0^2},$$
which is $(1-x)^{3(\alpha+1)}/(hf_0^2)$ times the sum of
$$\frac{x^{\lambda-1}\left[\lambda-\lambda x-\alpha x-(\lambda-\lambda x+\alpha x +2x)(1-x)^{\alpha+1}\right]}{(\alpha+1)(1-x)^{2\alpha+4}}\,f_0$$
and
$$\frac{(\alpha+2)(1-x)^{\alpha+1}+\alpha}{(\alpha+1)(1-x)^{2\alpha+4}}\,h.$$
We rearrange terms to obtain
$$\Delta_2(\lambda,x)=\frac{(1-x)^{3(\alpha+1)}}{(\alpha+1)hf_0^2}\,\left[T_1(\lambda,x)+T_2(\lambda,x)\right],$$
where
$$T_1(\lambda,x)=\frac{\lambda x^{\lambda-1}}{(1-x)^{\alpha+2}}\frac{f_0}{(1-x)^{\alpha+1}}+\frac{\alpha}{(1-x)^{\alpha+2}}\frac{h-x^\lambda f_0}{(1-x)^{\alpha+2}},$$
and
$$T_2(\lambda,x)=\frac{(\alpha+2)h-(\lambda-\lambda x+\alpha x +2x)x^{\lambda-1}f_0}{(1-x)^{\alpha+3}}.$$

It follows from (\ref{eq7}) that
$$\lim_{x\to1}\frac{(1-x)^{3(\alpha+1)}}{(\alpha+1)hf_0^2}=-(\alpha+1)^2,$$
and
$$\lim_{x\to1}T_1(\lambda,x)=0.$$
Since $(\alpha+1)f_0=1-(1-x)^{\alpha+1}$ and $\alpha<-3$, it follows from L'Hopital's rule 
and elementary manipulations that
$$\lim_{x\to1}T_2(\lambda,x)=-\frac{\lambda(\lambda-1)}{(\alpha+1)(\alpha+3)}.$$
Therefore,
\begin{eqnarray*}
\lim_{x\to1}\Delta(\lambda,x)&=&\lambda\,\frac{\alpha+1}{\alpha+2}\left(1-\lambda\,
\frac{\alpha+1}{\alpha+2}\right)+\lambda(\lambda-1)\frac{\alpha+1}{\alpha+3}\\
&=&\frac{\lambda(\alpha+1)(\lambda+2+\alpha)}{(\alpha+2)^2(\alpha+3)}.
\end{eqnarray*}
If $p=2$ and $k=1$, then for $\lambda=pk/2=1$ we have
$$\lim_{x\to1}\Delta(\lambda,x)=\frac{\alpha+1}{(\alpha+2)^2}<0.$$
This shows that $\Delta(\lambda,x)<0$ for $x$ sufficiently close to $1$. Thus the
function $\log M_{2,\alpha}(z,r)$ is not convex in $\log r$.
\end{proof}

The careful reader will notice that our methods in this section can be applied to determine
the optimal range for $\alpha$ when $p\ge2$ and the function is a monomial. With a little extra
work this can probably be pushed to work for $p<2$ as well. Since we are unable to pass from the
monomials to a general function in the case when $p\not=2$, we will not pursue this issue here
further.

\section{The case of $p=2$ and arbitrary $f$}

In this section we prove the logarithmic convexity of $M_{p,\alpha}(f,r)$ when $p=2$ and
$-3\le\alpha\leq0$. Basically, we reduce the problem to the case of monomials using a
theorem of Taylor from \cite{T}. More specifically, the following proposition follows from
\cite{T} using Banach space techniques. But an elementary proof is provided here using only
Taylor expansions and the Cauchy-Schwarz inequality.

\begin{prop}
Suppose $\{h_k(x)\}$ is a sequence of positive and twice differentiable functions on $(0,1)$
such that the function
$$H(x)=\sum_{k=0}^\infty h_k(x)$$
is also twice differentiable on $(0,1)$. If for each $k$ the function $\log h_k(x)$ is convex 
in $\log x$, then $\log H(x)$ is also convex in $\log x$.
\label{7}
\end{prop}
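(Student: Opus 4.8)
The plan is to use the characterization of logarithmic convexity supplied by Corollary~\ref{3}, namely that a positive twice-differentiable function $g$ has $\log g$ convex in $\log x$ if and only if the quantity
$$D(g)=\frac{g'}{g}+x\left(\frac{g'}{g}\right)'=\frac{g'}{g}+x\,\frac{g''}{g}-x\left(\frac{g'}{g}\right)^2$$
is nonnegative. Multiplying through by the positive factor $g/ x$ (and writing the condition in the equivalent form from Lemma~\ref{1}), the hypothesis that each $\log h_k$ is convex in $\log x$ becomes
$$h_k h_k' + x\,h_k h_k'' - x\,(h_k')^2 \ge 0, \qquad 0<x<1,$$
for every $k$. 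Our goal is the analogous inequality for $H=\sum_k h_k$, i.e.
$$H H' + x\,H H'' - x\,(H')^2 \ge 0.$$
First I would reduce to showing the combined quadratic quantity $H H' + xHH'' - x(H')^2$ is nonnegative, and note that the first two sums, $HH'$ and $xHH''$, split cleanly over the index $k$ when we expand $H=\sum h_j$ and $H'=\sum h_k'$; the genuine difficulty lives entirely in controlling the cross term $x(H')^2=x\bigl(\sum_k h_k'\bigr)^2$ against the diagonal pieces.

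The key step is therefore a Cauchy--Schwarz estimate on this cross term. Expanding everything into double sums indexed by $j,k$, the target inequality reads
$$\sum_{j,k}\Bigl(h_j h_k' + x\,h_j h_k'' - x\,h_j' h_k'\Bigr)\ge 0.$$
I would symmetrize in $j$ and $k$: pairing the index $(j,k)$ with $(k,j)$ replaces each summand by its symmetric average, so the inequality to prove becomes
$$\sum_{j,k}\Bigl(h_j h_k' + x\,h_j h_k'' - 2x\,h_j' h_k'\Bigr)\ge 0$$
after using $\sum_{j,k}(h_jh_k'+h_kh_j')=2\sum_{j,k}h_jh_k'$ and a similar identity for the $h''$ term. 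Now the diagonal hypothesis gives, for each unordered pair, a bound of the form $x\,h_j' h_k' \le$ (something controlled by the diagonal terms $h_jh_k''+h_kh_j''$), and the precise inequality I expect to need is the elementary $2h_j'h_k'\le \frac{h_j'^2}{h_j}h_k + \frac{h_k'^2}{h_k}h_j$, which is just the AM--GM / Cauchy--Schwarz bound $2ab\le a^2+b^2$ applied to $a=h_j'\sqrt{h_k/h_j}$ and $b=h_k'\sqrt{h_j/h_k}$ (both factors positive since every $h_k>0$). Substituting this termwise and then invoking the per-$k$ hypothesis $x(h_k')^2\le h_kh_k'+x h_kh_k''$ should collapse the whole double sum into a manifestly nonnegative expression.

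The main obstacle I anticipate is purely one of \emph{bookkeeping the cross terms}: one must verify that the Cauchy--Schwarz substitution is sharp enough that, after summing over all $j,k$, the surviving terms reassemble exactly into $\sum_k\bigl(h_kh_k'+xh_kh_k''-x(h_k')^2\bigr)$ weighted by nonnegative coefficients, with no leftover negative contribution. A secondary technical point is the legitimacy of differentiating $H$ termwise twice and of interchanging summation with the limit, but this is granted by the hypothesis that $H$ is twice differentiable on $(0,1)$ together with the positivity of the terms (which lets us treat the tails as monotone limits). Once the termwise algebra is arranged, Corollary~\ref{3} applied to $H$ finishes the argument immediately.
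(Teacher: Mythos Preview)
Your proposal is correct and follows essentially the same route as the paper: the termwise AM--GM bound $2h_j'h_k'\le (h_j')^2 h_k/h_j+(h_k')^2 h_j/h_k$, once summed over all $j,k$, is exactly the Cauchy--Schwarz estimate $(H')^2\le H\sum_k (h_k')^2/h_k$ that the paper applies directly, and both arguments then conclude via $D(H)\ge H^{-1}\sum_k h_k\,D(h_k)\ge 0$. (Your displayed ``symmetrized'' line with the coefficient $-2x$ is slightly mis-stated, but the intended algebra is sound.)
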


\begin{proof}
Recall from Corollary~\ref{3} that $\log f(x)$ is convex in $\log x$ if and only if
$$D(f(x))=\frac{f'(x)}{f(x)}+x\frac{f''(x)}{f(x)}-x\left(\frac{f'(x)}{f(x)}\right)^2\ge0.$$
By the Cauchy-Schwarz inequality,
\begin{eqnarray*}
\left[H'(x)\right]^2&=&\left[\sum_{k=0}^\infty h_k'(x)\right]^2
=\left[\sum_{k=0}^\infty\sqrt{h_k(x)}\frac{h'_k(x)}{\sqrt{h_k(x)}}\right]^2\\
&\le&\sum_{k=0}^\infty h_k(x)\sum_{k=0}^\infty\frac{h_k'(x)^2}{h_k(x)}
=H(x)\sum_{k=0}^\infty\frac{h_k'(x)^2}{h_k(x)}.
\end{eqnarray*}
From this we deduce that
\begin{eqnarray*}
D(H(x))&=&\frac{H'(x)}{H(x)}+x\frac{H''(x)}{H(x)}-x\left(\frac{H'(x)}{H(x)}\right)^2\\
&\ge&\frac{H'(x)}{H(x)}+x\frac{H''(x)}{H(x)}-x\,\frac{\sum_{k=0}^\infty h_k'(x)^2/h_k(x)}{H(x)}\\
&=&\frac1{H(x)}\left[H'(x)+xH''(x)-x\sum_{k=0}^\infty\frac{h_k'(x)^2}{h_k(x)}\right]\\
&=&\frac1{H(x)}\sum_{k=0}^\infty\left[h_k'(x)+xh_k''(x)-x\,\frac{h_k'(x)^2}{h_k(x)}\right]\\
&=&\frac1{H(x)}\sum_{k=0}^\infty h_k(x)D(h_k(x)).
\end{eqnarray*}
By assumption, each $D(h_k(x))$ is nonnegative on $(0,1)$ and each $h_k$ is positive on
$(0,1)$, so $D(H(x))$ is nonnegative on $(0,1)$. This completes the proof of the proposition.
\end{proof}

It is easy to adapt the above proof to obtain a continuous version of the proposition in terms
of integrals instead of infinite series. The resulting version is also a special case of the more general theorem in \cite{T}, so we omit the details.

We now obtain the main result of the paper.

\begin{thm}
Suppose $f\in H(\D)$ and $-3\le\alpha\le0$. Then the function $r\mapsto\log M_{2,\alpha}(f,r)$
is convex in $\log r$. Moreover, the range $-3\le\alpha\le0$ is best possible.
\label{8}
\end{thm}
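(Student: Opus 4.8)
The plan is to combine the monomial result from Proposition~\ref{5} with the summation principle from Proposition~\ref{7}. Write the Taylor expansion $f(z)=\sum_{k=0}^\infty a_k z^k$. The key observation is that when $p=2$, the area integral mean $M_{2,\alpha}(f,r)$ decomposes as a weighted sum over the monomials, because the weighted area measure is rotation invariant. More precisely, passing to polar coordinates and using the orthogonality of $e^{ik\theta}$, the numerator $\int_{r\D}|f|^2\,dA_\alpha$ equals $\sum_k |a_k|^2 \int_{r\D}|z^k|^2\,dA_\alpha$, with no cross terms surviving. Dividing by the common normalization $\int_{r\D}dA_\alpha$, I would obtain
$$
M_{2,\alpha}(f,r)=\sum_{k=0}^\infty |a_k|^2\,M_{2,\alpha}(z^k,r).
$$

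With this decomposition in hand, the first step is to identify each summand as a function whose logarithm is convex in $\log r$. Set $x=r$ (or work directly in $r$) and define $h_k(x)=|a_k|^2 M_{2,\alpha}(z^k,x)$, discarding any terms with $a_k=0$. Proposition~\ref{5} tells us that $\log M_{2,\alpha}(z^k,r)$ is convex in $\log r$ for each $k\ge0$ when $-3\le\alpha\le0$; multiplying a positive function by the constant $|a_k|^2$ shifts its logarithm by an additive constant and hence preserves convexity in $\log r$. So each $\log h_k$ is convex in $\log r$. The second step is to verify the hypotheses of Proposition~\ref{7}: each $h_k$ is positive and twice differentiable on $(0,1)$, and the sum $H(r)=\sum_k h_k(r)=M_{2,\alpha}(f,r)$ is twice differentiable on $(0,1)$, which follows since $f\in H(\D)$ makes the relevant integrals converge locally uniformly and permits differentiation under the integral sign on compact subsets of $\D$. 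Applying Proposition~\ref{7} then yields that $\log M_{2,\alpha}(f,r)$ is convex in $\log r$, establishing the first assertion.

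For the optimality claim, I would invoke Proposition~\ref{6}. Any function $f$ witnessing logarithmic convexity for all analytic functions must in particular do so for the monomials $f(z)=z^k$. Proposition~\ref{6} shows that whenever $\alpha\notin[-3,0]$ and $p=2$, there is some monomial $z^k$ for which $\log M_{2,\alpha}(z^k,r)$ fails to be convex in $\log r$. Since monomials are themselves elements of $H(\D)$, this failure is a counterexample to the convexity statement for general $f$, proving that the range $-3\le\alpha\le0$ cannot be enlarged.

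The only genuinely delicate point is the interchange of summation and differentiation needed to justify that $H(r)$ is twice differentiable and that its derivatives agree with the termwise derivatives $\sum_k h_k'$ and $\sum_k h_k''$ used inside the proof of Proposition~\ref{7}. I expect this to be routine rather than hard: on any compact interval $[0,\rho]\subset[0,1)$ the partial sums of the Taylor series converge uniformly, so the area integrals over $r\D\subset\rho\D$ and their $r$-derivatives converge uniformly, legitimizing the termwise operations. Everything else is a direct citation of the three earlier results, so the theorem follows almost immediately once the orthogonality decomposition is recorded.
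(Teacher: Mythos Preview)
Your proposal is correct and follows essentially the same approach as the paper: decompose $M_{2,\alpha}(f,r)$ via orthogonality into $\sum_k |a_k|^2 M_{2,\alpha}(z^k,r)$, apply Proposition~\ref{5} to each term, combine via Proposition~\ref{7}, and cite Proposition~\ref{6} for optimality. Your extra care about termwise differentiability is more than the paper itself offers, but the structure is identical.
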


\begin{proof}
Suppose
$$f(z)=\sum_{k=0}^\infty a_kz^k.$$
It follows from integration in polar coordinates that
$$M_{2,\alpha}(f,r)=\sum_{k=0}^\infty|a_k|^2M_{2,\alpha}(z^k,r).$$
By Proposition~\ref{5}, each function
$$h_k(r)=|a_k|^2M_{2,\alpha}(z^k,r)$$
has the property that $\log h_k(r)$ is convex in $\log r$. So by Proposition~\ref{7}, the
function $\log M_{2,\alpha}(f,r)$ is convex in $\log r$.

That the range $-3\le\alpha\le0$ is best possible follows from Proposition~\ref{6}.
\end{proof}

\section{Two Examples}

It was shown in \cite{XZ} by an example that when $\alpha>0$, $\log M_{p,\alpha}(f,r)$ is
not always convex in $\log r$. Based on this particular example and some circumstantial evidence,
it was further conjectured in \cite{XZ} that if $\alpha>0$, the function $\log M_{p,\alpha}(f,r)$ is concave in $\log r$. We show in this section that this is not so. In
fact, when $\alpha=1$ or $\alpha=-4$, we give examples such that the function $\log M_{2,\alpha}(f,r)$ is {\it neither} convex {\it nor} concave on $(0,1)$. These examples also
illustrate the somewhat abstract calculations we did in Section 2 with arbitrary monomials.

First, let $p=2$, $\alpha=1$, and $f(z)=1+z$. Then
$$dA_1(z)=(1-|z|^2)dA(z),\ z\in\D,$$ 
and 
$$A_1(r\D)=\int_{r\D}(1-|z|^2)dA(z)=\frac\pi2\,r^2(2-r^2).$$
Also,
\begin{eqnarray*}
\int_{r\D}|f(z)|^2\,dA_1(z)&=&\int_{r\D}(1+|z|^2)(1-|z|^2)\,dA(z)\\
&=&2\pi\int_0^r(1+t^2)(1-t^2)t\,dt\\
&=&\frac\pi3\,r^2(3-r^4).
\end{eqnarray*}
It follows that
$$M_{2,1}(1+z,r)=\frac{2(3-r^4)}{3(2-r^2)}.$$
By Corollary~\ref{2}, we just need to consider the convexity of the following function
in $\log x$:
$$h(x)=\frac{3-x^2}{2-x},\qquad 0<x<1.$$
Using the $D$-notation from Corollary~\ref{3}, we have
$$D(h(x))=D(3-x^2)-D(2-x).$$
It is elementary to show that
$$D(3-x^2)=-\frac{12x}{(3-x^2)^2},\quad D(2-x)=-\frac2{(2-x)^2},$$
from which we deduce that
$$D(h(x))=\frac{2g(x)}{(2-x)^2(3-x^2)^2},$$
where
$$g(x)=9-24x+18x^2-6x^3+x^4.$$

For $x\in(0,1)$ we have
$$g'(x)=-24+36x-18x^2+4x^3,$$
and
$$g''(x)=36-36x+12x^2>0.$$
Thus $g'(x)$ is increasing on $[0,1]$. In particular,
$$g'(x)\le g'(1)=-2<0,\qquad 0\le x\le1.$$
This shows that $g(x)$ is decreasing on $[0,1]$. Since $g(0)=9>0$ and $g(1)=-2<0$, there
exists a point $c\in(0,1)$ such that $g(x)>0$ for $x\in(0,c)$ and $g(x)<0$ for $x\in(c,1)$.
Thus the function $\log h(x)$ is neither convex nor concave in $\log x$.

We note that the functions $z+a$ have also been considered by Xiao and Xu \cite{XX} in their 
recent work on weighted area integral means of analytic functions and other related problems.

Next, consider the case when $p=2$, $\alpha=-4$, and $f(z)=\sqrt 2\,z$. In this case,
$$dA_{-4}(z)=(1-|z|^2)^{-4}dA(z),\ z\in\D,$$ 
and 
$$A_{-4}(r\D)=\int_{r\D}(1-|z|^2)^{-4}dA(z)=\frac{\pi}{3}\left((1-r^2)^{-3}-1\right).$$
Also,
\begin{eqnarray*}
\int_{r\D}|f(z)|^2\,dA_{-4}(z)&=&\int_{r\D}2|z|^2(1-|z|^2)^{-4}\,dA(z)\\
&=&\frac{\pi}{3}\left(2r^2(1-r^2)^{-3}-(1-r^2)^{-2}+1\right).
\end{eqnarray*}
It follows that
$$M_{2,-4}(z,r)=\frac{3r^2-r^4}{3-3r^2+r^4}.$$
By Corollary~\ref{2}, we just need to consider the convexity of the following function
in $\log x$:
$$h(x)=\frac{3x-x^2}{3-3x+x^2},\qquad 0<x<1.$$
Direct computations show that
\begin{eqnarray*}
h'(x)&=&\frac{3(3-2x)}{(3-3x+x^2)^2},\\
h''(x)&=&\frac{18(1-x)(2-x)}{(3-3x+x^2)^3}.\\
\end{eqnarray*}
Using the $D$-notation from Corollary~\ref{3}, we have
$$D(h(x))\sim 18-36x+21x^2-4x^3=:g(x).$$

For $x\in(0,1)$ we have
$$g'(x)=-36+42x-12x^2,$$
and
$$g''(x)=42-24x>0.$$
Thus $g'(x)$ is increasing on $[0,1]$. In particular,
$$g'(x)\le g'(1)=-4<0,\qquad 0\le x\le1.$$
This shows that $g(x)$ is decreasing on $[0,1]$. Since $g(0)=18>0$ and $g(1)=-1<0$, there
exists a point $c\in(0,1)$ such that $g(x)>0$ for $x\in(0,c)$ and $g(x)<0$ for $x\in(c,1)$.
Thus the function $\log h(x)$ is neither convex nor concave in $\log x$.

\end{document}